\newtheorem{thm}{Theorem}[section]
\newtheorem{lem}[thm]{Lemma}
\newtheorem{rem}[thm]{Remark}
\newcommand{\Fq}{\mathbb F_q}
\newcommand{\Fqr}{\mathbb F_{q^r}}
\newcommand{\CC}{\mathbb C}
\newcommand{\QQ}{\bar{\mathbb Q}_\ell}
\newcommand{\R}{\mathrm R}
\newcommand{\Gal}{\mathrm {Gal}}
\newcommand{\AAA}{\mathbb A}
\newcommand{\PP}{\mathbb P}
\newcommand{\FF}{\mathcal F}
\newcommand{\GG}{\mathbb G}
\newcommand{\HH}{\mathrm H}
\newcommand{\HHH}{{\mathcal H}}
\newcommand{\Tr}{\mathrm {Tr}}
\newcommand{\Dbc}{{\mathcal D}^b_c}
\title[On the number of rational points on curves over finite fields]{On the number of rational points on curves over finite fields with many automorphisms}
\author{Antonio Rojas-Le\'on}
\address{Departamanto de \'Algebra,
Universidad de Sevilla, Apdo 1160, 41080 Sevilla, Spain}
\address{E-mail: arojas@us.es}
\thanks{Partially supported by P08-FQM-03894 (Junta de Andaluc\'{\i}a), MTM2007-66929 and FEDER}
\begin{document}

\maketitle

\begin{abstract}
Using Weil descent, we give bounds for the number of rational points on two families of curves over finite fields with a large abelian group of automorphisms: Artin-Schreier curves of the form $y^q-y=f(x)$ with $f\in\Fqr[x]$, on which the additive group $\Fq$ acts, and Kummer curves of the form $y^{\frac{q-1}{e}}=f(x)$, which have an action of the multiplicative group $\Fq^\star$. In both cases we can remove a $\sqrt{q}$ factor from the Weil bound when $q$ is sufficiently large.
\end{abstract}

\section{Introduction}

Let $k=\Fq$ be a finite field of characteristic $p$ and $C$ a geometrically connected smooth curve of genus $g$ in $\PP^2_k$. The well known Weil bound gives the following estimate for the number of points $N_r$ of $C$ rational over $\Fqr$ for every $r\geq 1$:

$$
|N_r-q^r-1|\leq 2gq^{\frac{r}{2}}
$$

This bound is sharp in general if we fix $C$ and take variable $\Fqr$, in the sense that
$$
\limsup_{r\geq 1}\log_q|N_r-q^r-1|=\frac{r}{2}
$$
and
$$
\limsup_{r\geq 1}\frac{|N_r-q^r-1|}{q^{\frac{r}{2}}}=2g.
$$
However, for some curves it is possible to improve this bound for large values of $q$ if we keep $r$ under control. In the article \cite{artin-schreier} it was proven that this was the case for the affine Artin-Schreier curve $A_f$ defined by
$$
y^q-y=f(x)
$$
with $f\in k[x]$, whose singular model has genus $(d-1)(q-1)/2$ and only one point at infinity. For $A_f$ one can get an estimate of the form
$$
|N_r-q^r|\leq C_{d,r}q^{\frac{r+1}{2}}
$$
under certain generic conditions on $f$ (where $N_r$ is now the number of points on the \emph{affine} curve $A_f$). In this formula $C_{d,r}$ is independent of $q$ (more precisely, it is a polynomial in $d$ of degree $r$) so it gives a great improvement of the Weil bound if $q$ is large. 
 This estimate was obtained by writing $N_r-q^r$ as a sum of additive character sums
$$
N_r-q^r=\sum_{t\in k}\sum_{x\in k^r}\psi(\mathrm{Tr}_{k_r/k}(f(x))),
$$
each of them bounded by $(d-1)q^{\frac{r}{2}}$, and then showing that there is some cancellation on the outer sum so that the total sum is bounded by $O_q(q^{\frac{r+1}{2}})$.

In this article we take a different approach: since
$$
N_f=q\cdot\#\{x\in k_r|\mathrm{Tr}_{k_r/k}(f(x))=0\},
$$
using Weil descent we construct a hypersurface in $\AAA^r_k$ whose number of rational points is precisely the number of $x\in k_r$ such that $\mathrm{Tr}_{k_r/k}(f(x))=0$. Under certain conditions the projective closure of this hypersurface is smooth, so we can use Deligne's bound to estimate its number of rational points and deduce the bound
\begin{equation}\label{mainthm1}
|N_r-q^r|\leq (d-1)^r q^{\frac{r+1}{2}}.
\end{equation}

This method is certainly less powerful than the one used in \cite{artin-schreier}. In particular, the hypotheses we need $f$ to satisfy in order to get (\ref{mainthm1}) are more restrictive than those in \cite[Corollary 3.4, Corollary 4.2]{artin-schreier}, and the constant $(d-1)^r$ is also slightly worse (notice that the coefficient of the leading term of $C_{d,r}$ in \cite[Corollary 3.4]{artin-schreier} decreases rapidly as $r$ grows). On the other hand, this method works even when $f$ is defined only over $k_r$, not just over $k$, thus giving a positive answer to one of the questions posed in the introduction of \cite{artin-schreier}.

We also apply the same procedure to study the other example proposed in the introduction of \cite{artin-schreier}: Kummer curves, a particular type of superelliptic curves of the form
$$
E_f:y^{\frac{q-1}{e}}=f(x)
$$
where $e$ is a positive divisor of $q-1$. These curves can have genus anywhere between $\left(\frac{q-1}{e}-1\right)(d-2)/2$ and $\left(\frac{q-1}{e}-1\right)(d-1)/2$, but in any case for fixed $e$ the Weil estimate gives
$$
|N_r-q^r|=O(q^{\frac{r}{2}+1}).
$$ 

Here we also have a large abelian group acting faithfully on $E_f$, namely the multiplicative group $k^\star/\mu_e$ of non-zero elements of $k$ modulo the subgroup of $e$-th roots of unity, so one also expects to be able to remove a $\sqrt{q}$ factor from the bound. We can write
$$
N_f=\delta+\frac{q-1}{e}\sum_{\lambda^e=1}\#\{x\in k_r|\mathrm{N}_{k_r/k}(f(x))=\lambda\}
$$
where $\delta$ is the number of roots of $f$ in $k_r$. Again using Weil descent we will construct a hypersurface (or rather a one-parameter family of hypersurfaces) $W_\lambda$ in $\AAA^r_k$ such that the number of rational points of $W_\lambda$ over $k$ is $\#\{x\in k_r|\mathrm{N}_{k_r/k}(f(x))=\lambda\}$. The hypersurfaces $W_\lambda$ are highly singular at infinity, so this case requires a detailed study of the cohomology of this family, which takes most of the length of this article.

The descent method works surprisingly well in this case, and we get the estimate
 $$
|N_f-q^r-\delta+1|\leq r(d-1)^{r}(q-1)q^{\frac{s-1}{2}}
$$
under the only hypothesis that $f$ is square-free of degree prime to $p$.

The fact that the descent method works well in the Kummer case and not so well in the Artin-Schreier case has an explanation: for Artin-Schreier curves, we can write $N_r-q^r$ as a ``sum of additive character sums'', parameterized by the set of non-trivial additive characters of $k$. Upon choosing a non-trivial character $\psi$, this set can be identified with the set of $k$-points of the scheme $\GG_m=\AAA^1-\{0\}$, and the corresponding exponential sums are the local Frobenius traces of the $r$-th Adams power of some geometrically semisimple $\ell$-adic sheaf on $\GG_m$. In order to get a good estimate (i.e., of the form $O(q^{\frac{r+1}{2}})$), we need (all components of) this Adams power to not have any invariants when regarded as representations of $\pi_1(\GG_m)$. When doing Weil descent, what we are really looking at is the invariant space of the (Frobenius twisted) $r$-th tensor power of this sheaf, which is a much larger object. In particular, we may get some undesired additional invariants. In this case the monodromy group is semisimple, and therefore its determinant has some finite order $N$. Then its $N$-th tensor power is definitely going to have non-zero invariant space, which (in general) would not be present if we just considered the Adams power. 

On the other hand, for the Kummer case we can write $N_r-q^r$ as a sum of multiplicative character sums, parameterized by the set of all non-trivial multiplicative characters $\chi$ of $k^\star$ of order divisible by $\frac{q-1}{e}$. Even though it is not possible to realize these sums as the Frobenius traces of an $\ell$-adic sheaf on a scheme, recent work of Katz (\cite{mellin}, especially remark 17.7) shows that these sums are approximately distributed like traces of random elements on a compact Lie group. For generic $f$, this group is the unitary group $U_{d-1}$. In particular, all tensor powers of this ``representation'' (the standard $(d-1)$-dimensional representation of $U_{d-1}$) have zero invariant space, and this makes our method work well.

We conjecture that one should get a similar estimate for Kummer hypersurfaces of the form
$$
y^{\frac{q-1}{e}}=f(x_1,\ldots,x_n)
$$
where $f\in k_r[x_1,\ldots,x_n]$ is in some Zariski open set, namely one should have
$$
\left|N_r-q^{nr}\right|\leq C_{n,d,e,r}q^{\frac{nr+1}{2}}
$$
for some $C_{n,d,e,r}$ independent of $q$. However, the conditions in this case should necessarily be more restrictive, as shown by the example
$$
y^{q-1}=x_1x_2+1
$$
in which $f(x_1,x_2)=x_1x_2+1$ is as smooth as it can be but it is easy to check that
$$
N_r=q^{2s}+(q-2)q^r
$$  
for every odd $q$ and every $r$.

The author would like to thank Daqing Wan for pointing out some mistakes in an earlier version of this article.

\section{The Artin-Schreier case}

 Let $k=\Fq$ be a finite field of characteristic $p$, $k_r=\Fqr$ the extension of $k$ degree $r$ inside a fixed algebraic closure $\bar k$, and $f\in k_r[x]$ a polynomial of degree $d$ prime to $p$. Let $A_f$ be the Artin-Schreier curve defined in $\AAA^2_{k_r}$ by the equation
\begin{equation}\label{curve}
 y^q-y=f(x)
\end{equation}
and denote by $N_f$ its number of $k_r$-rational points. The group of $k$-rational points of the affine line $\AAA^1$ acts on $A_f(k_r)$ by $\lambda\cdot(x,y)=(x,y+\lambda)$.

By the general Artin-Schreier theory, an element $z\in k_r$ can be written as $y^q-y$ for some $y\in k_r$ if and only if $\Tr_{k_r/k}(z)=0$, and in that case there are exactly $q$ such $y$'s. Therefore
$$
N_f=q\cdot\#\{x\in k_r|\Tr(f(x))=0\}
$$
where $\Tr=\Tr_{k_r/k}$ is the trace map $k_r\to k$.

 Let us recall the Weil descent setup (cf. for instance \cite{soto-andrade}). Fix an basis ${\mathcal B}=\{\alpha_1,\ldots,\alpha_r\}\subseteq k_r$ of $k_r$ over $k$, and consider the polynomial $S(x_1,\ldots,x_r)=\sum_{j=1}^r f^{\sigma^j}(\sigma^j(\alpha_1)x_1+\cdots+\sigma^j(\alpha_{r})x_{r})\in k_r[x_1,\ldots,x_r]$, where $\sigma\in \Gal(k_r/k)$ is the Frobenius automorphism and $f^{\sigma^j}$ means applying $\sigma^j$ to the coefficients of $f$. Since the coefficients of $S$ are invariant under the action of $\Gal(k_r/k)$, $S\in k[x_1,\ldots,x_r]$. 

  Let $V$ be the subscheme of $\AAA^r_k$ defined by the polynomial $S$. Notice that a point $(x_1,\ldots,x_{r})\in k^r$ is in $V(k)$ if and only if $\sum_{j=1}^r f^{\sigma^j}(\sigma^j(\alpha_1)x_1+\cdots+\sigma^j(\alpha_{r})x_{r})=\sum_{j=1}^r\sigma^j(f(\alpha_1x_1+\cdots+\alpha_rx_{r}))=0$, if and only if $\Tr(f(\alpha_1x_1+\cdots+\alpha_rx_{r}))=0$. Since $\{\alpha_1,\ldots,\alpha_{r}\}$ is a basis of $k_r$ over $k$, we conclude that 
\begin{equation}\label{nftrace}
 N_f=q\cdot\#\{x\in k_r|\Tr(f(x))=0\}=q\cdot\# V(k).
\end{equation}

On the other hand, $V\otimes k_r$ is isomorphic, under a linear change of variable, to the hypersurface defined by $f^{\sigma}(x_1)+f^{\sigma^2}(x_2)+\cdots+f^{\sigma^r}(x_{r})=0$ in $\AAA^r_{k_r}$. Since $d$ is prime to $p$, $V$ has at worst isolated singularities, and its projective closure has no singularities at infinity. In particular, we get:

\begin{thm}\label{AScurve} Let $f\in k_r[x]$ be a polynomial of degree $d$ prime to $p$. If the hypersurface defined in $\AAA^r_k$ by $f^\sigma(x_1)+f^{\sigma^2}(x_2)+\cdots+f^{\sigma^r}(x_{r})=0$ is non-singular, the number $N_f$ of $k_r$-rational points on $C_f$ satisfies the estimate
 $$
|N_f-q^r|\leq \frac{(d-1)^{r+1}-(-1)^{r}(d-1)}{d}q^{\frac{r+1}{2}}+\frac{(d-1)^{r}-(-1)^{r-1}(d-1)}{d}q^{\frac{r}{2}}\leq
$$
$$
\leq (d-1)^{r}q^{\frac{r+1}{2}}.
$$
\end{thm}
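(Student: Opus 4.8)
The plan is to relate the counting problem to the cohomology of the smooth projective closure of the hypersurface $V$ via the Grothendieck--Lefschetz trace formula, and then control the relevant Betti numbers and Frobenius weights using Deligne's estimates for smooth hypersurfaces. First I would observe that by equation (\ref{nftrace}), $N_f = q \cdot \# V(k)$, so it suffices to estimate $\# V(k) - q^{r-1}$. By the discussion preceding the theorem, $V \otimes k_r$ becomes, after a linear change of variables, the affine hypersurface defined by $g(x_1,\ldots,x_r) := f^\sigma(x_1) + f^{\sigma^2}(x_2) + \cdots + f^{\sigma^r}(x_r) = 0$, which is assumed non-singular; since a linear change of variables does not affect the number of points or the cohomology, I may work with this separated-variable form. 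The key structural feature is that $g$ is a ``diagonal-type'' sum of one-variable polynomials, so its projective closure, its singularities, and its cohomology should decompose in terms of the individual factors $f^{\sigma^j}(x_j)$.

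Next I would pass to the smooth projective hypersurface $\bar V \subset \PP^r_k$ obtained as the projective closure, which by the remark before the theorem has no singularities at infinity (this uses $d$ prime to $p$), hence is smooth of degree $d$ in $\PP^r$. For a smooth projective hypersurface of dimension $r-1$ and degree $d$, the cohomology is entirely known: all cohomology groups $\HH^i(\bar V_{\bar k}, \QQ)$ for $i \neq r-1$ are those of $\PP^{r-1}$ (one-dimensional, pure of weight $i$ in even degree), and the only ``interesting'' group is the primitive part $\HH^{r-1}_{\mathrm{prim}}$, which is pure of weight $r-1$ and whose dimension is the primitive middle Betti number $b_{r-1}^{\mathrm{prim}}$. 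For a smooth degree-$d$ hypersurface in $\PP^r$ this dimension is given by the classical formula, and for the separated-variable polynomial $g$ it works out to $\frac{(d-1)^{r+1} - (-1)^{r+1}(d-1)}{d}$; this is exactly where the leading factor in the stated bound originates. I would then translate from the projective count back to the affine count of $V$: subtracting the hyperplane at infinity introduces a correction that is pure of one lower weight, accounting for the second term $\frac{(d-1)^r - (-1)^{r-1}(d-1)}{d} q^{r/2}$ in the estimate.

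With the cohomology pinned down, I would apply the Lefschetz trace formula to write $\# V(k)$ as an alternating sum of traces of Frobenius on the compactly supported cohomology of the affine $V$. The trivial cohomology (coming from the $\PP^{r-1}$-part) contributes exactly the main term $q^{r-1}$, and every remaining contribution lies in the primitive middle-degree cohomology (weight $r-1$, giving the $q^{(r-1)/2}$ growth after the factor of $q$ is restored to $q^{(r+1)/2}$) or in the boundary correction (weight $r-2$). Bounding each Frobenius eigenvalue by $q^{(r-1)/2}$ (respectively $q^{(r-2)/2}$) via Deligne's theorem and multiplying through by the overall factor of $q$ yields precisely the first displayed inequality. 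The final inequality $\frac{(d-1)^{r+1}-(-1)^r(d-1)}{d} q^{(r+1)/2} + \frac{(d-1)^r - (-1)^{r-1}(d-1)}{d} q^{r/2} \leq (d-1)^r q^{(r+1)/2}$ is then a routine algebraic simplification.

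The main obstacle I anticipate is the precise determination of the primitive middle Betti number and, more delicately, confirming the exact form of the boundary (weight $r-2$) correction term when passing from the projective closure $\bar V$ to the affine hypersurface $V$. The clean separated-variable structure of $g$ should make a direct Künneth-style or inductive computation feasible --- one expects the affine cohomology of $\{g = 0\}$ to factor through the cohomologies of the affine curves $\{f^{\sigma^j} = c\}$ --- but verifying that the dimensions assemble into exactly $\frac{(d-1)^{r+1} \pm (d-1)}{d}$ and that no unexpected extra cohomology appears at infinity (which is precisely what the smoothness hypothesis is designed to rule out) requires care. Once the Betti numbers and weights are correct, the bound itself follows immediately from Deligne.
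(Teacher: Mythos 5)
Your proposal follows essentially the same route as the paper: Weil descent to the affine hypersurface $V$, passage to its projective closure $\bar V\subset\PP^r_k$, and Deligne's bound for smooth projective hypersurfaces, with the two terms of the estimate coming from the primitive middle cohomology of $\bar V$ (weight $r-1$) and from a weight-$(r-2)$ correction at infinity. But two things need repair. First, a sign error: the primitive middle Betti number of a smooth hypersurface of degree $d$ and dimension $r-1$ in $\PP^r$ is $\frac{(d-1)^{r+1}+(-1)^{r-1}(d-1)}{d}=\frac{(d-1)^{r+1}-(-1)^{r}(d-1)}{d}$, not $\frac{(d-1)^{r+1}-(-1)^{r+1}(d-1)}{d}$ as you wrote. (Test with $r=3$, $d=3$: a smooth cubic surface has $b_2^{\mathrm{prim}}=6=\frac{16+2}{3}$, whereas your expression gives $\frac{16-2}{3}$, which is not even an integer.) As written, your formula does not reproduce the coefficient in the statement, contrary to your claim that it does.

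Second, the step you defer as the ``main obstacle'' --- pinning down the exact boundary correction --- has a one-line resolution, which is precisely what the paper does, and it requires no K\"unneth-style or inductive computation of affine cohomology. The part at infinity $V_0=\bar V-V$ is the hypersurface in the hyperplane $\PP^{r-1}$ at infinity cut out by the top-degree form of $g$, namely $c_1x_1^d+\cdots+c_rx_r^d=0$ with $c_j=\sigma^j(a_d)\neq 0$ the leading coefficients; since $d$ is prime to $p$, this diagonal hypersurface is smooth of dimension $r-2$ and degree $d$. Hence you can bypass the cohomology of the affine $V$ entirely: write $\#V(k)=\#\bar V(k)-\#V_0(k)$ and apply Deligne's bound to each smooth projective hypersurface separately,
$$
\left|\#\bar V(k)-\#\PP^{r-1}(k)\right|\leq \frac{(d-1)^{r+1}-(-1)^{r}(d-1)}{d}q^{\frac{r-1}{2}}
$$
and
$$
\left|\#V_0(k)-\#\PP^{r-2}(k)\right|\leq \frac{(d-1)^{r}-(-1)^{r-1}(d-1)}{d}q^{\frac{r-2}{2}},
$$
then note that $\#\PP^{r-1}(k)-\#\PP^{r-2}(k)=q^{r-1}$ and multiply through by $q$ (via $N_f=q\cdot\#V(k)$) to get exactly the stated estimate. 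With these two repairs, your argument becomes the paper's proof.
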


\begin{proof}
 If $\bar V$ is the projective closure of $V$ in $\PP^r_k$ and $V_0=\bar V-V$, we have
$$
\# V(k)-q^{r-1}=\#\bar V(k)-\#V_0(k)-(\#\PP^{r-1}(k)-\#\PP^{r-2}(k))=
$$
$$
=(\#\bar V(k)-\#\PP^{r-1}(k))-(\# V_0(k)-\#\PP^{r-2}(k))
$$
so
$$
|N_r-q^r|=q\cdot |\# V(k)-q^{r-1}|\leq 
$$
$$
\leq q\cdot (|\#\bar V(k)-\#\PP^{r-1}(k)|+|\# V_0(k)-\#\PP^{r-2}(k)|)\leq
$$
$$
\leq \frac{(d-1)^{r+1}-(-1)^{r}(d-1)}{d}q^{\frac{r+1}{2}}+\frac{(d-1)^{r}-(-1)^{r-1}(d-1)}{d}q^{\frac{r}{2}}
$$
since $\bar V$ and $V_0$ are non-singular of degree $d$ and dimension $r-1$ and $r-2$ respectively.
\end{proof}

As noted in \cite[end of section 3]{artin-schreier}, the non-singularity condition is generic in every linear space of polynomials of degree $d$ that contains the constants: if $\lambda\in k_r$ is such that $\Tr_{k_r/k}(\lambda)$ is not a critical point of $f^\sigma(x_1)+\cdots+f^{\sigma^r}(x_r)$, then $f-\lambda$ satisfies the condition. The order of magnitude of the constant is polynomial in $d$ of degree $r$, essentially the same as in \cite{artin-schreier}. However, the leading coefficient there decreases rapidly with $r$, whereas here it is always $1$.
 
The same procedure can be applied to Artin-Schreier hypersurfaces. Let $f\in k_r[x_1,\ldots,x_n]$ be a Deligne polynomial, that is, its degree $d$ is prime to $p$ and its highest homogeneous form defines a non-singular projective hypersurface. Let $B_f$ be the Artin-Schreier hypersurface defined in $\AAA^{n+1}_{k_r}$ by the equation
\begin{equation}
 y^q-y=f(x_1,\ldots,x_n)
\end{equation}
and denote by $N_f$ its number of $k_r$-rational points. Like in the previous case, we have
$$
N_f=q\cdot\#\{(x_1,\ldots,x_n)\in k_r^n|\Tr(f(x_1,\ldots,x_n))=0\}
$$
where $\Tr$ is the trace map $k_r\to k$. Let $S\in k_r[\{x_{i,j}|1\leq i\leq n,1\leq j\leq r\}]$ be the polynomial
$$
\sum_{j=1}^r f^{\sigma^j}\left(\sum_{i=1}^{r}\sigma^j(\alpha_i)x_{1,i},\ldots,\sum_{i=1}^{r}\sigma^j(\alpha_i)x_{n,i}\right)
$$
which has coefficients in $k$, and $V$ the subscheme of $\AAA^{nr}_k$ defined by $S$. Again $N_f=q\cdot \# V(k)$, and $V\otimes k_r$ is isomorphic to the hypersurface defined by $f^\sigma(x_{1,1},\ldots,x_{n,1})+\cdots+f^{\sigma^r}(x_{1,r},\ldots,x_{n,r})=0$. Since this hypersurface is non-singular at infinity, we get

\begin{thm}
\label{AShypersurface} Let $f\in k_r[x_1,\ldots,x_n]$ be a Deligne polynomial of degree $d$ prime to $p$. If the hypersurface defined in $\AAA^{nr}_k$ by $f^\sigma(x_{1,1},\ldots,x_{n,1})+\cdots+f^{\sigma^r}(x_{1,r},\ldots,x_{n,r})=0$ is non-singular, the number $N_f$ of $k_r$-rational points on $C_f$ satisfies the estimate
 $$
|N_r-q^{nr}|\leq \frac{(d-1)^{nr+1}-(-1)^{nr}(d-1)}{d}q^{\frac{nr+1}{2}}+\frac{(d-1)^{nr}-(-1)^{nr-1}(d-1)}{d}q^{\frac{nr}{2}}\leq
$$
$$
\leq (d-1)^{nr}q^{\frac{nr+1}{2}}.
$$
\end{thm}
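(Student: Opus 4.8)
The plan is to follow the proof of Theorem \ref{AScurve} verbatim, with the ambient dimension $r$ replaced by $nr$, since the Weil descent now produces a hypersurface $V$ in $\AAA^{nr}_k$ satisfying $N_f=q\cdot\#V(k)$. The two geometric inputs I need are that the projective closure $\bar V\subseteq\PP^{nr}_k$ is smooth and that its part at infinity $V_0=\bar V-V$ is smooth; granting these, Deligne's bound applies to both pieces and the estimate drops out of an elementary point-count identity.

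First I would record that identity. Since $V$ is a hypersurface, $\#V(k)=\#\bar V(k)-\#V_0(k)$, and since $q^{nr-1}=\#\PP^{nr-1}(k)-\#\PP^{nr-2}(k)$, regrouping gives
$$
\#V(k)-q^{nr-1}=\left(\#\bar V(k)-\#\PP^{nr-1}(k)\right)-\left(\#V_0(k)-\#\PP^{nr-2}(k)\right).
$$
Multiplying by $q$ and using $N_f=q\cdot\#V(k)$ reduces the problem to bounding the two ``defect'' terms on the right.

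Each defect is controlled by Deligne's theorem for smooth projective hypersurfaces: if $X\subseteq\PP^N_k$ is smooth of degree $d$ and dimension $N-1$, then $|\#X(k)-\#\PP^{N-1}(k)|\leq\frac{(d-1)^{N+1}-(-1)^N(d-1)}{d}\,q^{(N-1)/2}$, the coefficient being the dimension of the primitive middle cohomology. Applying this to $\bar V$ (with $N=nr$) and to $V_0\subseteq\PP^{nr-1}_k$ (with $N=nr-1$), then multiplying through by $q$, produces precisely the two displayed terms of the statement. The final inequality $\leq(d-1)^{nr}q^{(nr+1)/2}$ is then a routine bound on the sum of the two Betti numbers.

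The only step that genuinely uses the Deligne-polynomial hypothesis, and hence the main point to check beyond the curve case, is the smoothness of $\bar V$ at infinity, equivalently the smoothness of $V_0$. The form cutting out $V_0$ in $\PP^{nr-1}_k$ is $\sum_{j=1}^r F^{\sigma^j}(x_{1,j},\ldots,x_{n,j})$, where $F$ is the leading form of $f$. Because the $r$ blocks of variables are disjoint, the partials split blockwise, so a projective singular point would force the partials of every $F^{\sigma^j}$ to vanish at once; since $F$, and hence each Galois conjugate $F^{\sigma^j}$, is nonsingular by hypothesis, this forces all coordinates to vanish, which is impossible in projective space. Thus $V_0$ is smooth, and $\bar V$ is smooth throughout --- in the affine chart by the theorem's hypothesis and at infinity by this argument --- so Deligne's bound is legitimately available for both pieces.
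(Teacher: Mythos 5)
Your proof is correct and is essentially the paper's own argument: the paper deduces this theorem by running the proof of Theorem \ref{AScurve} with $r$ replaced by $nr$, the only genuinely new point being the non-singularity at infinity, which the paper merely asserts from the Deligne-polynomial hypothesis and you verify via the blockwise-gradient argument (implicitly using Euler's relation and $p\nmid d$ to pass from non-singularity of each form $F^{\sigma^j}$ to its gradient vanishing only at the origin). Nothing further is needed.
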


\section{The Kummer case}

Fix a positive integer $e$ which divides $q-1$. Let $E_f$ be the Kummer curve defined in $\AAA^2_{k_r}$ by the equation
\begin{equation}\label{curve2}
 y^{\frac{q-1}{e}}=f(x)
\end{equation}
and denote by $N_f$ its number of $k_r$-rational points. The group $k^\star$ of $k$-rational points of the torus $\GG_m$ acts on $E_f(k_r)$ by $\lambda\cdot(x,y)=(x,\lambda^ey)$.

A non-zero element $z\in k_r$ can be written as $y^{\frac{q-1}{e}}$ for some $y\in k_r$ if and only if $\mathrm{N}_{k_r/k}(z)^e=1$, and in that case there are exactly $\frac{q-1}{e}$ such $y$'s. Therefore
$$
N_f=\#Z(k_r)+\frac{q-1}{e}\cdot\#\{x\in k_r|\mathrm{N}(f(x))^e=1\}
$$
where $\mathrm{N}$ is the norm map $k_r\to k$ and $Z$ is the subscheme of $\AAA^1_{k_r}$ defined by $f=0$. 

If we apply the Weil descent method to identify the set $\{x\in k_r|\mathrm{N}(f(x))^e=1\}$ with the set of $k$-rational points on a scheme over $k$ like we did in the Artin-Schreier case we get a scheme geometrically isomorphic to the one defined by $(f^\sigma(x_1)\cdots f^{\sigma^r}(x_{r}))^e=1$, which is highly singular at infinity. In particular, its higher cohomology groups do not vanish. However, these cohomology groups are relatively easy to control as we will see.  

Fix an basis ${\mathcal B}=\{\alpha_1,\ldots,\alpha_r\}\subseteq k_r$ of $k_r$ over $k$, and consider the polynomial $T(x_1,\ldots,x_{r})=\prod_{j=1}^r f^{\sigma^j}(\sigma^j(\alpha_1)x_1+\cdots+\sigma^j(\alpha_r)x_{r})\in k_r[x_1,\ldots,x_r]$, where $\sigma\in\Gal(k_r/k)$ is the Frobenius automorphism and $f^{\sigma^j}$ means applying $\sigma^j$ to the coefficients of $f$. The coefficients of $T$ are invariant under the action of $\Gal(k_r/k)$, so $T\in k[x_1,\ldots,x_r]$. 

  For any $\lambda\in k$, let $W_\lambda$ be the subscheme of $\AAA^r_k$ defined by $T=\lambda$. A point $(x_1,\ldots,x_{r})\in k^r$ is in $W_\lambda(k)$ if and only if $\prod_{j=1}^r f^{\sigma^j}(\sigma^j(\alpha_1)x_1+\cdots+\sigma^j(\alpha_r)x_{r})=\prod_{j=1}^r\sigma^j(f(\alpha_1x_1+\cdots+\alpha_rx_{r}))=\lambda$, if and only if $\mathrm{N}(f(\alpha_1x_1+\cdots+\alpha_rx_r))=\lambda$. Since $\{\alpha_1,\ldots,\alpha_r\}$ is a basis of $k_r$ over $k$, we conclude that 
\begin{equation}\label{nfnorm}
 N_f=\#Z(k_r)+\frac{q-1}{e}\cdot\#\{x\in k_r|\mathrm{N}(f(x))^e=1\}= \end{equation}
$$
 = \#Z(k_r)+\frac{q-1}{e}\sum_{\lambda^e=1}\#\{x\in k_r|\mathrm{N}(f(x))=\lambda\}=\#Z(k_r)+\frac{q-1}{e}\sum_{\lambda^e=1}\# W_\lambda(k).
$$

Now $W_\lambda\otimes k_r$ is isomorphic, under a linear change of variables, to the hypersurface defined by $f^\sigma(x_1)f^{\sigma^2}(x_2)\cdots f^{\sigma^r}(x_{r})=\lambda$. This hypersurface is highly singular at infinity, so in general we are not going to obtain good bounds for its number of rational points. For instance, in the simplest case $f(x)=x$, the hypersurface is a product of $r-1$ tori. In particular, it has non-zero cohomology with compact support in all degrees between $r-1$ and $2r-2$.

In order to understand the cohomology of these hypersurfaces, it will be convenient to consider the entire family $f^\sigma(x_1)\cdots f^{\sigma^r}(x_{r})=\lambda$ parameterized by $\lambda$ and study the relative cohomology sheaves. We will do this in a more general setting. Let $f_1,\ldots,f_s\in k_r[x]$ be polynomials of degree $d$, and let $F_s:\AAA^s_{k_r}\to\AAA^1_{k_r}$ be the map defined by $F_s(x_1,\ldots,x_{s})=f_1(x_1)\cdots f_s(x_{s})$. Fix a prime $\ell\neq p$ and an isomorphism $\iota:\QQ\to\CC$, and let $K_s:=\R {F_s}_!\QQ\in\Dbc(\AAA^1_{k_r},\QQ)$ be the relative $\ell$-adic cohomology complex with compact support of $F_s$. For dimension reasons, $\HHH^j(K_s)=0$ for $j<0$ and $j>2s-2$.

\begin{lem}
 Suppose that $f_i$ is square-free for every $i=1,\ldots,s$. Then
\begin{enumerate}
\item $\HHH^j(K_s)_{|\GG_m}=0$ for $j<s-1$.
\item If $s\geq 2$, $\HHH^{2s-2}(K_s)_{|\GG_m}$ is the Tate-twisted constant sheaf $\QQ(1-s)$.
 \item $\HHH^j(K_s)_{|\GG_m}$ is geometrically constant of weight $2(j-s+1)$ for $s\leq j\leq 2s-3$.
 \item $\HHH^{s-1}(K_s)_{|\GG_m}$ contains a subsheaf ${\mathcal F}_s$ which is the extension by direct image of a smooth sheaf on an open subset $V\hookrightarrow\GG_m$ of rank $s(d-1)^s$, pure of weight $s-1$, unipotent at $0$ and totally ramified at infinity, such that the quotient $\HHH^{s-1}(K_s)_{|\GG_m}/{\mathcal F}_s$ is geometrically constant of rank $d^s-(d-1)^s$ and weight $0$.
 \item $\HH^1_c(\GG_m,\FF_s)$ is pure of weight $0$ and dimension $(d-1)^s$.
\end{enumerate}

If all $f_i$ split completely in $k_r$ one can replace ``geometrically constant'' by ``Tate-twisted constant'' everywhere and $\Gal(\bar k_r/k_r)$ acts trivially on $\HH^1_c(\GG_m,\FF_s)$.

\end{lem}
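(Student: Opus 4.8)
The plan is to proceed by induction on $s$, analyzing the fibration $F_s(x_1,\ldots,x_s) = f_1(x_1)\cdots f_s(x_s)$ by peeling off one variable at a time. The natural inductive structure comes from writing $F_s = F_{s-1} \cdot f_s(x_s)$, which realizes $K_s = \R{F_s}_!\QQ$ as (essentially) a convolution of $K_{s-1}$ with the pushforward $\R{g}_!\QQ$ where $g(x_s) = f_s(x_s)$. Since each $f_i$ is square-free of degree $d$, the one-variable map $g\colon \AAA^1 \to \AAA^1$ has $d$ distinct roots, so $\R{g}_!\QQ$ restricted to $\GG_m$ is concentrated in degree $0$, where it is a lisse sheaf whose stalk at $\lambda$ is $\QQ$-vectors indexed by the $d$ solutions of $f_s(x) = \lambda$; the geometric monodromy permutes these $d$ sheets, and the ramification structure at $0$ and $\infty$ is governed by the critical values of $f_s$ and its behavior at infinity respectively. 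The multiplicative convolution on $\GG_m$ will then let me assemble the cohomology of $K_s$ from that of $K_{s-1}$ via a Leray/Künneth spectral sequence for the multiplication map $m\colon \GG_m \times \GG_m \to \GG_m$.

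\textbf{The key steps, in order.} First I would establish the base case $s=1$: here $K_1 = \R{g}_!\QQ$ with $g = f_1$, and over $\GG_m$ this is the lisse rank-$d$ sheaf described above in degree $0$. Its decomposition into a constant part (the invariants of the monodromy permutation representation, of rank $1$, spanned by the "sum of sheets") and a complementary part $\FF_1$ of rank $d-1$ is exactly statement (4) for $s=1$; purity follows because $g$ is finite flat over $\GG_m$, and the ramification claims (unipotent at $0$, totally ramified at $\infty$) come from examining the local monodromy, using square-freeness at $0$ and $\deg f_1 = d$ prime to $p$ at $\infty$. Statement (5), $\dim\HH^1_c(\GG_m,\FF_1) = d-1$, then follows from the Euler-Poincaré/Grothendieck-Ogg-Shafarevich formula together with the vanishing of $\HH^0_c$ and $\HH^2_c$ for a totally-ramified-at-infinity sheaf with no geometric invariants. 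For the inductive step, I would feed $K_{s-1}$ (whose structure is known by induction) and $K_1$ into the convolution and read off the cohomology sheaves of $K_s$: the top sheaf $\HHH^{2s-2}$ in (2) comes from the top pieces multiplying together, the intermediate geometrically constant sheaves in (1) and (3) come from pairing constant pieces with the various cohomology groups of the middle-dimensional sheaf, and the genuinely interesting middle sheaf $\HHH^{s-1}$ in (4) arises as the $!$-convolution $\FF_{s-1} * \FF_1$ together with the mixed constant/non-constant cross terms. The weight assertions throughout follow from Deligne's Weil II applied to $\R{F_s}_!$, and the rank count $s(d-1)^s$ for $\FF_s$ together with the complementary rank $d^s - (d-1)^s$ should drop out of the multiplicativity of ranks under convolution combined with a careful bookkeeping of how the $d$-versus-$(d-1)$ splitting propagates (the factor $s$ signaling a maximal unipotent/Jordan-block degeneration, not mere tensor multiplication). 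Finally (5) for general $s$ follows from $\FF_s$ being pure of weight $s-1$, totally ramified at $\infty$ and tame-unipotent at $0$, again via Grothendieck-Ogg-Shafarevich once the Swan conductor at infinity is computed.

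\textbf{The main obstacle} I expect is the precise determination of the middle sheaf $\HHH^{s-1}(K_s)_{|\GG_m}$ in statement (4): both the exact rank $s(d-1)^s$ and the extension-by-direct-image structure (controlling the local monodromy at the bad points $V \hookrightarrow \GG_m$ over which the sheaf is merely lisse) are delicate. The convolution $\FF_{s-1} * \FF_1$ will a priori have a larger rank, and one must show that the "extra" monodromy invariants appearing at $\infty$ and the drops at the finitely many critical values of the product map are exactly accounted for by the direct-image construction, so that the lisse rank on $V$ is $s(d-1)^s$ while the genuinely constant quotient has the complementary rank. Separating the pure weight-$(s-1)$ submodule $\FF_s$ from the geometrically constant quotient requires knowing that the convolution produces no unwanted weight-$(s-1)$ constant pieces; this I would control by tracking the local monodromy at infinity, where total ramification of $\FF_s$ (no $\infty$-invariants) forces the constant quotient to carry the complementary, tamely-behaved part. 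The ramification analysis at infinity — where the hypersurface is "highly singular," as the introduction stresses — is where the real work lies, and where I would need the hypothesis $\deg f_i = d$ prime to $p$ most essentially, via control of Swan conductors under multiplicative convolution.
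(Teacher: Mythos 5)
Your overall strategy --- induction on $s$ via the convolution structure $K_s=\R\mu_!(K_{s-1}\boxtimes K_1)$ for the multiplication map $\mu$, splitting each complex into a geometrically constant part and an ``interesting'' part, and using Weil II and Grothendieck--Ogg--Shafarevich for weights and ranks --- is exactly the paper's. But the proposal stops short precisely where the proof has to do its real work, and one of the steps you declare routine is false as stated. You claim ``the weight assertions throughout follow from Deligne's Weil II applied to $\R{F_s}_!$'': Weil II applied to a compactly supported pushforward gives only the upper bound (mixed of weight $\leq s-1$), never purity, since $\HH^1_c$ of an open curve genuinely mixes weights. The paper obtains purity of $\FF_s$ by showing that at a suitably chosen point $t$ the fibre is $\HH^1_c(\GG_m,\FF_{s-1}\otimes\sigma_t^\star\FF(f_s))$, where the two factors have disjoint ramification loci in $\GG_m$ (this is where square-freeness of $f_s$, hence lisseness of $\FF(f_s)$ at $0$, enters), so the tensor product is its own middle extension and is totally ramified at $0$ and $\infty$; then $\HH^1_c\cong\HH^1(\PP^1,\cdot)$ and purity follows from the two opposite Weil II inequalities. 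Likewise, the structural claims of (4) that you explicitly defer (``this is where the real work lies'') need ideas nowhere present in your outline: the paper (a) invokes Katz's semicontinuity theorem to rule out punctual sections of $\HHH^{s-1}(K_s)$; (b) reduces both the middle-extension property and total ramification at $\infty$ to the single vanishing $\HH^1_c(\AAA^1,{j_0}_\star\FF_s)=0$; and (c) proves that vanishing via the inequality $\dim\FF_s^{I_0}\geq\dim\HH^1_c(\GG_m,\FF_s)=(d-1)^s$, which comes from a fibre computation at $0$: by base change $\HHH^{s-1}(K_s)_0\cong\HH^s_c(U_1\times\cdots\times U_s,\QQ)$ has dimension $d^s$, and subtracting the constant part of rank $d^s-(d-1)^s$ gives exactly the needed bound. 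Without this computation you have no handle at all on the local monodromy of $\FF_s$ at $0$ or $\infty$.

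Relatedly, the rank $s(d-1)^s$ and the unipotence at $0$ do not ``drop out'' of convolution bookkeeping. The rank is an Ogg--Shafarevich computation, $-\chi(\GG_m,\FF_{s-1}\otimes\sigma_t^\star\FF(f_s))=(d-1)\cdot(d-1)^{s-1}+(s-1)(d-1)^{s-1}\cdot(d-1)=s(d-1)^s$, valid only because at every point of $\GG_m$ at least one of the two factors is lisse; and unipotence at $0$ is deduced from Weil II, Section 1.8: each Frobenius eigenvalue on the weight-$0$ space $\FF_s^{I_0}$, sitting inside the pure weight-$(s-1)$ sheaf $\FF_s$, forces a unipotent Jordan block of size $s$, and $(d-1)^s$ such blocks exactly exhaust the rank $s(d-1)^s$. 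These interlocking steps (purity feeding into unipotence, the fibre count feeding into the rank of $\FF_s^{I_0}$) are the mathematical content of parts (4) and (5); since your proposal only names them as obstacles rather than resolving them, it is a correct plan aimed in the paper's direction, but not yet a proof.
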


\begin{proof}
  We will proceed by induction on $s$, as in \cite[Th\'eor\`eme 7.8]{kloosterman}. For $s=1$, (1), (2) and (3) are empty, so we only need to prove (4) and (5). In this case, $K_1={f_1}_!\QQ[0]$. There is a natural trace map ${f_1}_!\QQ\to\QQ$, let ${\mathcal F}_1$ be its kernel. Since $d$ is prime to $p$, the inertia group $I_\infty$ at infinity acts on ${\mathcal F}_1$ via the direct sum of all its non-trivial characters of order divisible by $d$. In particular, ${\mathcal F}_1$ is totally ramified at infinity, and is clearly pure of weight $0$. Now from the exact sequence $0\to\FF_1\to {f_1}_!\QQ\to \QQ\to 0$ we get $\HH^1_c(\GG_m,\FF_1)\hookrightarrow\HH^1_c(\GG_m,f_!\QQ)=\HH^1_c(U_1,\QQ)=\HH^0_c(Z_1,\QQ)$ which is pure of weight $0$, where $Z_1\subseteq\AAA^1$ is the subscheme defined by $f_1=0$ and $U_1=\AAA^1-Z_1$. Moreover, $\dim\HH^1_c(\GG_m,\FF_1)=\dim\HH^1_c(\GG_m,{f_1}_!\QQ)-\dim\HH^1_c(\GG_m,\QQ)=\dim\HH^1_c(U_1,\QQ)-\dim\HH^1_c(\GG_m,\QQ)=d-1$ since $f_1$ has $d$ distinct roots in $\bar k$. If $f_1$ splits completely in $k_r$, then $U_1(k_r)=U_1(\bar k_r)$ and therefore $\Gal(\bar k_r/k_r)$ acts trivially on $\HH^1_c(U_1,\QQ)$ and a fortiori on $\HH^1_c(\GG_m,\FF_1)$. 

From now on let us denote $K(f_1)=K_1$ and $\FF(f_1)=\FF_1$ in order to keep track of the polynomial from which they arise. We move now to the induction step, so suppose the lemma has been proved for $s-1$. Since $F_s$ is the composition of $F_{s-1}\times f_s$ and the multiplication map $\mu:\AAA^2_{k_r}\to\AAA^1_{k_r}$, we get $K_s=\R\mu_!(\AAA^ 1\times\AAA^1,K_{s-1}\boxtimes K(f_s))$. In particular, ${K_s}_{|\GG_m}=\R\mu_!(\GG_m\times\GG_m,K_{s-1}\boxtimes K(f_s))$. From the distinguished triangles
$$
\FF(f_s)[0]\to K(f_s)\to\QQ[0]\to
$$
and
$$
\FF_{s-1}[2-s]\to K_{s-1}\to L_{s-1}\to
$$
where $L_{s-1}$ is the ``constant part'' of $K_{s-1}$, we get the distinguished triangles
\begin{equation}\label{DT1}
\R\mu_!(K_{s-1}\boxtimes\FF(f_s)[0])\to {K_s}_{|\GG_m} \to \R\mu_!(\pi_1^\star K_{s-1})\to,
\end{equation}
\begin{equation}\label{DT2}
\R\mu_!(\pi_1^\star \FF_{s-1})[2-s]\to\R\mu_!(\pi_1^\star K_{s-1})\to\R\mu_!(\pi_1^\star L_{s-1})\to
\end{equation}
and
\begin{equation}\label{DT3}
\R\mu_!(\FF_{s-1}\boxtimes\FF(f_s))[2-s]\to\R\mu_!(K_{s-1}\boxtimes\FF(f_s)[0])
\to\R\mu_!(L_{s-1}\boxtimes\FF(f_s)[0])\to
\end{equation}
where $\pi_1,\pi_2:\GG_m\times\GG_m\to\GG_m$ are the projections.

Let $\sigma:\GG_m\times\GG_m\to\GG_m\times\GG_m$ be the automorphism given by $(u,v)\mapsto (u,uv)$. Then $\mu=\pi_2\circ\sigma$ and $\pi_1=\pi_1\circ\sigma$, so
$$
\R\mu_!(\pi_1^\star \FF_{s-1})=\R{\pi_2}_!(\pi_1^\star\FF_{s-1})=\R\Gamma_c(\GG_m,\FF_{s-1})
$$
where the last object is seen as a geometrically constant object (in fact constant if $f_1,\ldots,f_{s-1}$ split in $k_r$) in $\Dbc(\GG_m,\QQ)$. By part (4) of the induction hypothesis, we have $\HH^1_c(\GG_m,\FF_{s-1})=0$ for $i=0,2$, so $\R\Gamma_c(\GG_m,\FF_{s-1})[2-s]=\HH^1_c(\GG_m,\FF_{s-1})[1-s]$. Similarly, using the automorphism $(u,v)\mapsto (uv,v)$ we get
$$
\R\mu_!(L_{s-1}\boxtimes \FF(f_s))=\R\Gamma_c(\GG_m,L_{s-1}\otimes\FF(f_s))
$$
and
$$
\R\mu_!(\pi_1^\star L_{s-1})=\R\Gamma_c(\GG_m,L_{s-1})
$$
which are both geometrically constant (and constant if $f_1,\ldots,f_s$ split in $k_r$).

 With these ingredients we can now start proving the lemma. We have already seen that $\R\Gamma_c(\GG_m,\FF_{s-1})[2-s]$ only has non-zero cohomology in degree $s-1$. By induction, $L_{s-1}$ only has non-zero cohomology in degrees $\geq s-2$. Since a constant object has obviously no punctual sections in $\GG_m$, we deduce that $\R\Gamma_c(\GG_m,L_{s-1}\otimes\FF(f_s))$ and $\R\Gamma_c(\GG_m,L_{s-1})$ only have cohomology in degrees $\geq s-1$.

 For the first term in the triangle (\ref{DT3}) we have
$$
\R\mu_!(\FF_{s-1}\boxtimes\FF(f_s))=\R{\pi_2}_!((\pi_1\circ\sigma^{-1})^\star\FF_{s-1}\otimes(\pi_2\circ\sigma^{-1})^\star\FF(f_s))=
$$
$$
=\R{\pi_2}_!(\pi_1^\star\FF_{s-1}\otimes(\pi_2\circ\sigma^{-1})^\star\FF(f_s))
$$
Its fibre over a geometric point $t\in\GG_m$ is $\R\Gamma_c(\GG_m,\FF_{s-1}\otimes\sigma_t^\star\FF(f_s))$, where $\sigma_t(u)=t/u$ is an automorphism of $\GG_m$. Since $\FF_{s-1}\otimes\sigma_t^\star\FF(f_s)$ has no punctual sections, it does not have cohomology in degree $0$, and therefore $\R\mu_!(\FF_{s-1}\boxtimes\FF(f_s))[2-s]$ only has cohomology in degrees $\geq s-1$. Using the distinguished triangles \ref{DT1}, \ref{DT2} and \ref{DT3} this proves (1).

Since $\FF_{s-1}$ is totally ramified at infinity, $\HH^2_c(\GG_m,\FF_{s-1})=0$, so $\R\mu_!(\pi_i^\star \FF_{s-1})[2-s]=\R\Gamma_c(\GG_m,\FF_{s-1})[2-s]$ has no cohomology in degree $\geq s$ (and in particular in degree $2s-2$). On the other hand, since $\FF(f_s)$ is totally ramified at infinity, so are all cohomology sheaves of $L_{s-1}\otimes\FF(f_s)$. Since $L_{s-1}$ only has cohomology in degrees $\leq 2s-4$, the spectral sequence $\HH^i_c(\GG_m,\HHH^j(L_{s-1})\otimes\FF(f_s))\Rightarrow\HH^{i+j}_c(\GG_m,L_{s-1}\otimes\FF(f_s))$ implies that $L_{s-1}\otimes\FF(f_s)$ only has non-zero cohomology in degrees $\leq 2s-3$. Finally, since $\FF(f_s)$ is smooth at $0$ (because $f_s$ is square-free and therefore \'etale over $0$), $\sigma_t^\star\FF(f_s)$ is unramified at infinity and therefore $\FF_{s-1}\otimes\sigma_t^\star\FF(f_s)$ is totally ramified at infinity. In particular, $\HH^2_c(\GG_m,\FF_{s-1}\otimes\sigma_t^\star\FF(f_s))=0$ and $\R\mu_!(\FF_{s-1}\boxtimes\FF(f_s))[2-s]$ has no cohomology in degree $\geq s$ (in particular in degree $2s-2$). From the triangles \ref{DT1} and \ref{DT2} we get then isomorphisms
$$
\HHH^{2s-2}({K_2}_{|\GG_m})\cong \R^{2s-2}\mu_!(\pi_1^\star K_{s-1})\cong\R^{2s-2}\mu_!(\pi_1^\star L_{s-1})\cong
$$
$$
\cong\HH^2_c(\GG_m,\HHH^{2s-4}(L_{s-1}))\cong\HH^2_c(\GG_m,\QQ(2-s))=\QQ(1-s)
$$
by the induction hypothesis and the spectral sequence $\HH^i_c(\GG_m,\HHH^{j}(L_{s-1}))\Rightarrow\HH^{i+j}_c(\GG_m,L_{s-1})$, where the last two objects are regarded as constant sheaves on $\GG_m$. This proves (2).

For (3), we have already seen that the left hand side of triangle \ref{DT3} only has cohomology in degree $s-1$. Similarly, the left hand side of triangle \ref{DT2} $\R\Gamma_c(\GG_m,\FF_{s-1})[2-s]=\HH^1_c(\GG_m,\FF_{s-1})[1-s]$ only has cohomology in degree $s-1$. Since the other two ends of \ref{DT2} and \ref{DT3} are geometrically constant, we conclude that $\HHH^j(K)_{|\GG_m}$ is geometrically constant for $j\geq s$ using triangle \ref{DT1}.

 Let $s\leq j\leq 2s-3$. For any geometrically constant object $L$, we have $\R\Gamma_c(\GG_m,L)=L\otimes\R\Gamma_c(\GG_m,\QQ)\cong L[-1]\oplus L(-1)[-2]$. In particular 
$$\HHH^j(\R\Gamma_c(\GG_m,L_{s-1}))\cong \HHH^{j-1}(L_{s-1})\oplus\HHH^{j-2}(L_{s-1})(-1)$$ is pure of weight $2(j-s+1)$ by induction. Similarly $\HHH^j(\R\Gamma_c(\GG_m,L_{s-1}\otimes\FF(f_s)))=\HHH^j(L_{s-1}\otimes\R\Gamma_c(\GG_m,\FF(f_s)))\cong\HHH^{j-1}(L_{s-1})\otimes\HH^1_c(\GG_m,\FF(f_s))$ is pure of weight $2(j-s+1)$ since $\HH^1_c(\GG_m,\FF(f_s))$ is pure of weight $0$. Using triangle \ref{DT1} this proves that $\HHH^j(K)_{|\GG_m}$ is pure of weight $2(j-s+1)$. 

From triangles \ref{DT1} and \ref{DT3} we get exact sequences
\begin{equation}\label{seq1}
0\to\R^{s-1}\mu_!(K_{s-1}\boxtimes\FF(f_s)[0])\to \HHH^{s-1}({K_s}_{|\GG_m}) \to
\end{equation}
$$
\to \R^{s-1}\mu_!(\pi_1^\star K_{s-1})\to \R^s\mu_!(K_{s-1}\boxtimes\FF(f_s)[0])
$$
and
$$
0\to\R^{1}\mu_!(\FF_{s-1}\boxtimes\FF(f_s))\to\R^{s-1}\mu_!(K_{s-1}\boxtimes\FF(f_s)[0])
\to
$$
$$
\to\HH^{s-1}_c(\GG_m,L_{s-1}\otimes\FF(f_s))\to 0.
$$

We have already shown that $\R^s\mu_!(K_{s-1}\boxtimes\FF(f_s)[0])$ is pure of weight $2(s-s+1)=2$. On the other hand, from triangle \ref{DT2} we get an exact sequence
$$
\HH^1_c(\GG_m,\FF_{s-1})\to\R^{s-1}\mu_!(\pi_1^\star K_{s-1})\to\HHH^{s-1}(\R\Gamma_c(\GG_m,L_{s-1}))
$$
where the left hand side has weight $0$ by part (5) of the induction hypothesis and the right hand side $\HHH^{s-1}(\R\Gamma_c(\GG_m,L_{s-1}))\cong \HHH^{s-1}(L_{s-1}[-1]\oplus L_{s-1}(-1)[-2])=\HHH^{s-2}(L_{s-1})\oplus \HHH^{s-3}(L_{s-1})(-1)=\HHH^{s-2}(L_{s-1})$ also has weight $0$ by part (4) of the induction hypothesis. Therefore $\R^{s-1}\mu_!(\pi_1^\star K_{s-1})$ is pure of weight $0$, and the last arrow in sequence (\ref{seq1}) is trivial:
$$
0\to\R^{s-1}\mu_!(K_{s-1}\boxtimes\FF(f_s)[0])\to \HHH^{s-1}({K_s}_{|\GG_m}) \to
\R^{s-1}\mu_!(\pi_1^\star K_{s-1})\to 0
$$

Let $\FF_s:=\R^1\mu_!(\FF_{s-1}\boxtimes\FF(f_s))$ (the multiplicative convolution of $\FF(f_1),\ldots,\FF(f_s)$). Then $\FF_s\hookrightarrow \HHH^{s-1}({K_s}_{|\GG_m})$, and the quotient sits inside an exact sequence
$$
0\to\HH^{s-1}_c(\GG_m,L_{s-1}\otimes\FF(f_s))\to\HHH^{s-1}({K_s}_{|\GG_m})/\FF_s\to \R^{s-1}\mu_!(\pi_1^\star K_{s-1})\to 0
$$
whose extremes are already known to be geometrically constant by triangle \ref{DT2}. The rank of this quotient is
$$\dim\HH^{s-1}_c(\GG_m,L_{s-1}\otimes\FF(f_s))+\dim\R^{s-1}\mu_!(\pi_1^\star K_{s-1})
$$
$$
=(\dim\HHH^{s-2}(L_{s-1}))(\dim\HH^1_c(\GG_m,\FF(f_s)))+\dim\HH^1_c(\GG_m,\FF_{s-1})+\dim\HH^{s-1}_c(\GG_m,L_{s-1})$$
$$
=(d^{s-1}-(d-1)^{s-1})(d-1)+(d-1)^{s-1}+\dim\HHH^{s-2}(L_{s-1})+\dim\HHH^{s-3}(L_{s-1})
$$
$$
=d^s-d^{s-1}-(d-1)^s+(d-1)^{s-1}+(d^{s-1}-(d-1)^{s-1})
$$
$$
=d^s-(d-1)^s
$$
by parts (4) and (5) of the induction hypothesis.

By \cite[Corollary 6 and Proposition 9]{degeneration}, $\HHH^{s-1}(K_s)$ (and in particular its subsheaf $\FF_s$) does not have punctual sections in $\AAA^1$. Let $j_0:\GG_m\hookrightarrow\AAA^1$ be the inclusion. We claim that $\HH^1_c(\AAA^1,{j_0}_\star\FF_s)=0$. This will prove both that $\FF_s$ is the extension by direct image of its restriction to any open set $j_V:V\hookrightarrow\GG_m$ on which it is smooth and that it is totally ramified at infinity, since from the exact sequences
$$
0\to {j_0}_\star\FF_s\to {j_0}_\star{j_V}_\star j_V^\star\FF_s\to {\mathcal Q}:={j_V}_\star j_V^\star\FF_s/\FF_s\mbox{(punctual)} \to 0
$$
and
$$
0\to {j_\infty}_!{j_0}_\star\FF_s\to {j_\infty}_\star{j_0}_\star\FF_s\to \FF_s^{I_\infty}\to 0
$$
where $j_\infty:\AAA^1\hookrightarrow\PP^1$ is the inclusion, we get injections ${\mathcal Q}\hookrightarrow\HH^1_c(\AAA^1,{j_0}_\star\FF_s)$ and $\FF_s^{I_\infty}\hookrightarrow\HH^1_c(\AAA^1,{j_0}_\star\FF_s)$.

 Let $i_0:\{0\}\hookrightarrow\AAA^1$ be the inclusion. From the exact sequence
$$
0\to {j_0}_!\FF_s\to {j_0}_\star\FF_s\to {i_0}_\star i_0^\star{j_0}_\star\FF_s\to 0
$$
and the fact that $\FF_s$ has no punctual sections we get
$$
0\to \FF_s^{I_0} \to\HH^1_c(\GG_m,\FF_s)\to\HH^1_c(\AAA^1,{j_0}_\star\FF_s)\to 0
$$
where $\FF_s^{I_0}$ is the invariant space of $\FF_s$ as a representation of the inertia group $I_0$. So it suffices to show that $\dim\FF_s^{I_0}\geq \dim\HH^1_c(\GG_m,\FF_s)$ (and then we will automatically have equality). By definition of $\FF_s$, $\HH^1_c(\GG_m,\FF_s)=\HH^2_c(\GG_m\times\GG_m,\FF_{s-1}\boxtimes\FF(f_s))=\HH^1_c(\GG_m,\FF_{s-1})\times\HH^1_c(\GG_m,\FF(f_s))$. Therefore $\HH^1_c(\GG_m,\FF_s)$ is pure of weight $0$ and dimension $(d-1)^{s-1}(d-1)=(d-1)^s$ by induction, thus proving (5). If $f_1,\ldots,f_s$ split in $k_r$ then $\HH^1_c(\GG_m,\FF_s)$ is a trivial $\Gal(\bar k_r/k_r)$-module, also by induction.

 On the other hand, $\HHH^{s-1}(K_s)_{|\GG_m}$ contains $\FF_s$ plus a geometrically constant part of dimension $d^s-(d-1)^s$. So $\dim\HHH^{s-1}(K_s)^{I_0}=\dim\FF_s^{I_0}+(d^s-(d-1)^s)$. Since $\HHH^{s-1}(K_s)$ has no punctual sections, there is an injection $\HHH^{s-1}(K_s)_0\hookrightarrow \HHH^{s-1}(K_s)^{I_0}$, so $\dim\HHH^{s-1}(K_s)^{I_0}\geq \dim\HHH^{s-1}(K_s)_0$. By base change, $\HHH^{s-1}(K_s)_0=\HH^{s-1}_c(\{f_1(x_1)\cdots f_s(x_s)=0\},\QQ)=\HH^{s}_c(\{f_1(x_1)\cdots f_s(x_s)\neq 0\},\QQ)=\HH^s_c(U_1\times\cdots\times U_s,\QQ)=\HH^1_c(U_1,\QQ)\times\cdots\times\HH^1_c(U_s,\QQ)$, where $U_i\subseteq\AAA^1$ is the open set defined by $f_i(x)\neq 0$ (since the $U_i$ only have non-zero cohomology in degrees $1$ and $2$), so $\dim\HHH^{s-1}(K_s)_0=d^s$. We conclude that $\dim\FF_s^{I_0}=\dim\HHH^{s-1}(K_s)^{I_0}-(d^s-(d-1)^s)\geq \dim\HHH^{s-1}(K_s)_0-(d^s-(d-1)^s)=(d-1)^s=\dim\HH^1_c(\GG_m,\FF_s)$.

To prove (4) it only remains to show that ${\FF_s}_{|V}$ is pure of weight $s-1$ and rank $s(d-1)^s$ and has unipotent monodromy action at $0$. Let $t\in \GG_m$ be a geometric point which is not the product of a non-smoothness point of $\FF_{s-1}$ and a non-smoothness point of $\FF(f_s)$. The fibre of $\FF_s$ over $t$ is $\HH^1_c(\GG_m,\FF_{s-1}\otimes\sigma_t^\star\FF(f_s))$. By the choice of $t$, at every point of $\GG_m$ at least one of $\FF_{s-1}$, $\sigma_t^\star\FF(f_s)$ is smooth. Therefore if $\FF_{s-1}\otimes\sigma_t^\star\FF(f_s)$ is smooth in the open set $j_W:W\hookrightarrow \GG_m$, ${j_W}_\star j_W^\star(\FF_{s-1}\otimes\sigma_t^\star\FF(f_s))=({j_W}_\star j_W^\star\FF_{s-1})\otimes({j_W}_\star j_W^\star\sigma_t^\star\FF(f_s))=\FF_{s-1}\otimes\sigma_t^\star\FF(f_s)$. Given that $\FF_{s-1}$ (respectively $\sigma_t^\star\FF(f_s)$) is pure of weight $s-2$, unipotent at $0$ and totally ramified at $\infty$ (resp. pure of weight $0$, unramified at $\infty$ and totally ramified at $0$), $\FF_{s-1}\otimes\sigma_t^\star\FF(f_s)$ is pure of weight $s-2$ and totally ramified at both $0$ and $\infty$, so $\HH^1_c(\GG_m,\FF_{s-1}\otimes\sigma_t^\star\FF(f_s))=\HH^1(\PP^1,{j_\infty}_\star j_W^\star(\FF_{s-1}\otimes\sigma_t^\star\FF(f_s)))$ is pure of weight $s-1$, where $j_\infty:W\hookrightarrow \PP^1$ is the inclusion.

As for the rank, since $\FF_{s-1}\otimes\sigma_t^\star\FF(f_s)$ has no punctual sections and is totally ramified at $0$ and $\infty$, $\dim\HH^1_c(\GG_m,\FF_{s-1}\otimes\sigma_t^\star\FF(f_s))=-\chi(\GG_m,\FF_{s-1}\otimes\sigma_t^\star\FF(f_s))$. By the Ogg-Shafarevic formula, for each of $\FF_{s-1}$, $\sigma_t^\star\FF(f_s)$ its Euler characteristic is ($-1$ times) a sum of local terms for the points of $\PP^1$ where they are ramified. The local terms at $0$, $\infty$ are the Swan conductors, which get multiplied by $D$ upon tensoring with a unipotent sheaf of rank $D$. The local terms corresponding to ramified points in $\GG_m$ (Swan conductor plus drop of the rank) are multiplied by $D$ upon tensoring with an unramified sheaf of rank $D$. Since at every point of $\GG_m$ at least one of $\FF_{s-1}$, $\sigma_t^\star\FF(f_s)$ is unramified, we conclude that $-\chi(\GG_m,\FF_{s-1}\otimes\sigma_t^\star\FF(f_s))=-(d-1)\chi(\GG_m,\FF_{s-1})-(s-1)(d-1)^{s-1}\chi(\GG_m,\FF(f_s))=(d-1)(d-1)^{s-1}+(s-1)(d-1)^{s-1}(d-1)=s(d-1)^s$.

Finally, since $\FF_s^{I_0}\cong\HH^1_c(\GG_m,\FF_s)$ has weight $0$ and $\FF_s$ is pure of weight $s-1$, for every Frobenius eigenvalue of $\FF_s^{I_0}$ there is a unipotent Jordan block of size $s$ for the monodromy of $\FF_s$ at $0$ by \cite[Section 1.8]{weil2}. Since its rank is $s(d-1)^s$, these Jordan blocks fill up the entire space, and therefore the $I_0$ action is unipotent. This finishes the proof of (4) and of the lemma.  
\end{proof}

Now let $T:\AAA^r_{k_r}\to\AAA^1_{k_r}$ be the map defined by the polynomial $T$, and $K'_r:=\R T_!\QQ\in\Dbc(\AAA^1_{k},\QQ)$. After extending scalars to $k_r$, $K'_r$ becomes isomorphic to $K_r$ for $f_j=f^{\sigma^j}$, $j=1,\ldots,r$. Since the results of the lemma are invariant under finite extension of scalars, they also hold for $K'_r$. In particular, for every $r-1\leq j\leq 2r-2$ there exist $\beta_{j,1},\ldots,\beta_{j,d_j}\in\CC$ of absolute value $1$, where $d_j=\mathrm{rank}\;\HHH^j(K_r)$ (or the rank of the constant part if $j=r-1$) such that for every finite extension $k_m$ of $k$ of degree $m$ and every $\lambda\in k_m^\star$
\begin{equation}\label{beta}
\#\{(x_1,\ldots,x_r)\in k_m^r|T(x_1,\ldots,x_r)=\lambda\}=
\end{equation}
$$
=\sum_{j=r-1}^{2r-2} (-1)^j\sum_{l=1}^{d_j}q^{m(j-r+1)}\beta_{j,l}^m+(-1)^{r-1}\Tr(\mathrm{Frob}_{k_m,\lambda}|\FF_r).
$$ 
Taking the sum over all $\lambda\in k_m^\star$ and using the trace formula:
$$
\#\{(x_1,\ldots,x_{r})\in k_m^r|T(x_1,\ldots,x_{r})\neq 0\}=
$$
$$
=\sum_{j=r-1}^{2r-2} (-1)^j\sum_{l=1}^{d_j}(q^m-1)q^{m(j-r+1)}\beta_{j,l}^m+(-1)^{r-1}\sum_{\lambda\in k_m}\Tr(\mathrm{Frob}_{k_m,\lambda}|\FF_r)=
$$ 
$$
=\sum_{j=r-1}^{2r-2} (-1)^j\sum_{l=1}^{d_j}(q^m-1)q^{m(j-r+1)}\beta_{j,l}^m+(-1)^r\Tr(\mathrm{Frob}_{k_m}|\HH^1_c(\GG_m,\FF_r)).
$$ 

Let $b$ be the degree of a splitting field of $f$ over $k_r$. Then the (geometrically constant) cohomology sheaves of $K'_r$ become constant after extending scalars to $k_{br}$. In particular all $\beta_{j,l}$ are $br$-th roots of unity. If $m$ is any positive integer congruent to $1$ modulo $br$ we have then  
$$
\#\{(x_1,\ldots,x_{r})\in k_m^r|T(x_1,\ldots,x_{r})\neq 0\}=
$$
$$
=\sum_{j=r-1}^{2r-2} (-1)^j\sum_{l=1}^{d_j}(q^m-1)q^{m(j-r+1)}\beta_{j,l}+(-1)^r\Tr(\mathrm{Frob}_{k_m}|\HH^1_c(\GG_m,\FF_r))=
$$
$$
=\left(\sum_{l=1}^{d_{2r-2}}\beta_{2r-2,l}\right)q^{mr}+\sum_{j=r-1}^{2r-2}(-1)^{j-1}\left(\sum_{l=1}^{d_{j-1}}\beta_{j-1,l}+\sum_{l=1}^{d_j}\beta_{j,l}\right)q^{m(j-r+1)}+
$$
$$
+(-1)^r\Tr(\mathrm{Frob}_{k_m}|\HH^1_c(\GG_m,\FF_r)).
$$ 

Since $m$ is prime to $r$, $\mathcal B$ is a basis of $k_{mr}$ over $k_{m}$, and thus $T(x_1,\ldots,x_{r})=N_{k_{mr}/k_m}(f(\alpha_ 1x_1+\cdots+\alpha_{r}x_{r}))$. Therefore
$$
\#\{(x_1,\ldots,x_r)\in k_m^r|T(x_1,\ldots,x_r)\neq 0\}=
$$
$$
=\#\{x\in k_{mr}|\mathrm{N}_{k_{mr}/k_m}(f(x))\neq 0\}=
\#\{x\in k_{mr}|f(x)\neq 0\}
$$
and in particular
$$
\left|\#\{(x_1,\ldots,x_r)\in k_m^r|T(x_1,\ldots,x_r)\neq 0\}-q^{mr}\right|\leq d.
$$

Substituting in the formula above, we get
$$
\left|\left(\sum_{l=1}^{d_{2r-2}}\beta_{2r-2,l}-1\right)q^{mr}+\sum_{j=r-1}^{2r-2}(-1)^{j-1}\left(\sum_{l=1}^{d_{j-1}}\beta_{j-1,l}+\sum_{l=1}^{d_j}\beta_{j,l}\right)q^{m(j-r+1)}+\right.
$$
$$
\left.+(-1)^r\Tr(\mathrm{Frob}_{k_m}|\HH^1_c(\GG_m,\FF_r))\right|\leq d
$$ 

Letting $m\to\infty$ and using that $|\Tr(\mathrm{Frob}_{k_m}|\HH^1_c(\GG_m,\FF_r))|\leq (d-1)^r$ is bounded by a constant, we conclude that $\sum_{l=1}^{d_{2r-2}}\beta_{2r-2,l}=1$ and 
$$
\sum_{l=1}^{d_{j-1}}\beta_{j-1,l}+\sum_{l=1}^{d_{j}}\beta_{j,l}=0
$$
for every $r\leq j\leq 2r-2$, so $\sum_{l=1}^{d_{j}}\beta_{j,l}=(-1)^j$ for every $r-1\leq j\leq 2r-2$.

\begin{thm}\label{SEcurve} Let $f\in k_r[x]$ be a square-free polynomial of degree $d$ prime to $p$ and $e|q-1$. Then the number $N_f$ of $k_r$-rational points on the curve
$$
y^{\frac{q-1}{e}}=f(x)
$$
satisfies the estimate
 $$
|N_f-q^r-\delta+1|\leq r(d-1)^{r}(q-1)q^{\frac{r-1}{2}}
$$
where $0\leq\delta\leq d$ is the number of roots of $f$ in $k_r$.
\end{thm}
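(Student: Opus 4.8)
The plan is to combine the point-count formula (\ref{nfnorm}) with the cohomological expansion (\ref{beta}) specialized to $m=1$, and then to exploit the relations $\sum_{l=1}^{d_j}\beta_{j,l}=(-1)^j$ (valid for $r-1\le j\le 2r-2$) established just above the statement in order to collapse the ``constant'' part of the resulting sum into the expected main term. First I would note that since $e\mid q-1$, the group $k^\star$ contains exactly $e$ elements $\lambda$ with $\lambda^e=1$, all of them nonzero, so (\ref{beta}) applies to each $W_\lambda$ with $m=1$ and gives
$$\#W_\lambda(k)=\sum_{j=r-1}^{2r-2}(-1)^j\sum_{l=1}^{d_j}q^{j-r+1}\beta_{j,l}+(-1)^{r-1}\Tr(\Frob_{k,\lambda}\,|\,\FF_r).$$
Summing this over the $e$ roots $\lambda^e=1$ separates a $\lambda$-independent contribution from the genuinely varying trace term on $\FF_r$.

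Next I would simplify the constant contribution. Applying $\sum_{l=1}^{d_j}\beta_{j,l}=(-1)^j$, each inner double sum collapses to $(-1)^j q^{j-r+1}\cdot(-1)^j=q^{j-r+1}$, and $\sum_{j=r-1}^{2r-2}q^{j-r+1}=\sum_{i=0}^{r-1}q^i=(q^r-1)/(q-1)$. Hence the constant part of $\sum_{\lambda^e=1}\#W_\lambda(k)$ equals $e\cdot(q^r-1)/(q-1)$. Multiplying by $\tfrac{q-1}{e}$ turns it into exactly $q^r-1$, and combining with the term $\#Z(k_r)=\delta$ from (\ref{nfnorm}) yields
$$N_f-q^r-\delta+1=(-1)^{r-1}\frac{q-1}{e}\sum_{\lambda^e=1}\Tr(\Frob_{k,\lambda}\,|\,\FF_r).$$
This reduces the whole estimate to bounding the trace sum over the $e$ roots of unity.

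Finally I would estimate each trace by purity. By part (4) of the Lemma, $\FF_r$ is the extension by direct image of a lisse sheaf of rank $r(d-1)^r$ that is pure of weight $r-1$. At a point $\lambda$ where $\FF_r$ is smooth, all Frobenius eigenvalues have absolute value $q^{(r-1)/2}$, so $|\Tr(\Frob_{k,\lambda}\,|\,\FF_r)|\le r(d-1)^r q^{(r-1)/2}$. At the finitely many non-smooth points of $\GG_m$ the stalk is the space of local inertia invariants, whose dimension is at most the generic rank and whose Frobenius weights are at most $r-1$, so the same bound holds there. Summing these $e$ terms and multiplying by $\tfrac{q-1}{e}$ gives $r(d-1)^r(q-1)q^{(r-1)/2}$, which is the asserted estimate.

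The only delicate point, and the one I expect to require genuine care rather than bookkeeping, is the uniform bound $|\Tr(\Frob_{k,\lambda}\,|\,\FF_r)|\le r(d-1)^r q^{(r-1)/2}$ at the bad points $\lambda$, where $\FF_r$ fails to be lisse. Here one must invoke precisely the structure established in part (4) of the Lemma, namely that $\FF_r$ is an extension by direct image of a pure sheaf of weight $r-1$: this guarantees that the stalks at the ramified points are inertia invariants of weight at most $r-1$ and dimension at most the generic rank, so that the smooth-point bound persists. Everything else is elementary manipulation of the already-known expansion.
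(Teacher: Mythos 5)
Your proposal is correct and follows essentially the same route as the paper's own proof: specialize (\ref{beta}) at $m=1$, collapse the constant part via $\sum_l\beta_{j,l}=(-1)^j$ into $q^r-1$ after multiplying by $\tfrac{q-1}{e}$, and bound the remaining trace sum using that $\FF_r$ is a direct-image extension of a sheaf pure of weight $r-1$ and rank $r(d-1)^r$. The ``delicate point'' you flag (controlling stalks at ramified points by inertia invariants of dimension at most the generic rank and weight at most $r-1$) is exactly what the paper compresses into the remark that the rank can only drop at ramified points, so your treatment is just a more explicit version of the same argument.
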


\begin{proof}
 Substituting the computed values for $\sum_{l=1}^{d_j}\beta_{j,l}$ in equation \ref{beta} for $m=1$ we get
$$
\# W_\lambda(k)=\#\{(x_1,\ldots,x_{r})\in k^r|T(x_1,\ldots,x_{r})=\lambda\}=
$$
$$
=\sum_{j=r-1}^{2r-2} q^{j-r+1}+(-1)^{r-1}\Tr(\mathrm{Frob}_{k,\lambda}|\FF_r)
=\sum_{j=0}^{r-1} q^{j}+(-1)^{r-1}\Tr(\mathrm{Frob}_{k,\lambda}|\FF_r).
$$
 
So, by equation \ref{nfnorm}, we have
$$
N_f=\#Z(k_r)+\frac{q-1}{e}\sum_{\lambda^e=1}\# W_\lambda(k)=
$$
$$
=\delta+\frac{q-1}{e}\sum_{\lambda^e=1}\left(\sum_{j=0}^{r-1} q^j +(-1)^{r-1}\Tr(\mathrm{Frob}_{k,\lambda}|\FF_r)\right)=
$$
$$
=\delta+(q^r-1)+(-1)^{r-1}\frac{q-1}{e}\sum_{\lambda^e=1}\Tr(\mathrm{Frob}_{k,\lambda}|\FF_r),
$$
so
$$
|N_f-q^r-\delta+1|\leq \frac{q-1}{e}\sum_{\lambda^e=1}r(d-1)^rq^\frac{r-1}{2}=r(d-1)^r(q-1)q^\frac{r-1}{2}
$$
since $\FF_r$ is pure of weight $r-1$ and generic rank $r(d-1)^r$ by the lemma, and its rank can only drop at ramified points.
\end{proof}

\begin{rem}\emph{ The condition that $f$ is square-free is necessary, as shown by the example
 $$
y^{q-1}=x^d
$$
in which 
$$
N_r=1+\#\{x\in k_r|N_{k_r/k}(x^d)=1\}=1+\sum_{t\in k,t^d=1}\#\{x\in k_r|N_{k_r/k}(x)=t\}=
$$
$$
=1+\mu_d\cdot(q^{s-1}+q^{r-2}+\cdots+q+1)
$$
where $\mu_d\geq 1$ is the number of $d$-th roots of unity in $k$.}
\end{rem}


\begin{thebibliography}{}

\bibitem{kloosterman} Deligne, P., \newblock \emph{Applications de la formule des traces aux sommes trigonom\'etriques}, \newblock in \emph{Cohomologie \'Etale, S\'eminaire de G\'eom\'etrie Alg\'ebrique du Bois-Marie (SGA 4$\frac{1}{2}$)},\newblock Lecture Notes in Mathematics 569, Springer-Verlag.

\bibitem{weil2} Deligne, P.,  \newblock \emph{La conjecture de Weil II},
\newblock Publ. Math. IHES, 52(1980), 137-252.

\bibitem{soto-andrade} Katz, N., \newblock \emph{Estimates for Soto-Andrade sums}, \newblock J. reine angew. Math. 438 (1993), 143--161.

\bibitem{degeneration} Katz, N., \newblock \emph{A semicontinuity result for monodromy under degeneration}, \newblock Forum Math. 15 (2003), 191--200.

\bibitem{mellin} Katz, N., \newblock \emph{Sato-Tate Theorems for Finite-Field Mellin Transforms}, \newblock preprint (2010)

\bibitem{artin-schreier} Rojas-Le\'on, A. and Wan, D., \newblock \emph{Big improvements of the Weil bound for Artin-Schreier curves}, \newblock preprint (2010), arXiv:1004.2224 [math:AG].


\end{thebibliography}
\end{document}